\newcommand{\R}{\mathbb R}
\newcommand{\E}{\mathbb E}
\renewcommand{\span}{\mathrm{span}}
\newcommand{\tr}{\mathrm{tr}}
\newtheorem{thm}{Theorem}[section]
\theoremstyle{definition}
\newtheorem{defn}[thm]{Definition}
\theoremstyle{remark}
\newcommand{\ds}{\displaystyle}
\begin{document}

\title[Surfaces with Parallel Normalized Mean Curvature Vector Field] {Surfaces with Parallel Normalized Mean Curvature Vector Field in Euclidean or Minkowski 4-Space}

\author{Georgi Ganchev and Velichka Milousheva}
\address{Institute of Mathematics and Informatics, Bulgarian Academy of Sciences,
Acad. G. Bonchev Str. bl. 8, 1113 Sofia, Bulgaria}
\email{ganchev@math.bas.bg}
\address{Institute of Mathematics and Informatics, Bulgarian Academy of Sciences,
Acad. G. Bonchev Str. bl. 8, 1113, Sofia, Bulgaria} \email{vmil@math.bas.bg}

\subjclass[2000]{Primary 53B20, Secondary 53A07, 53A55}
\keywords{Parallel normalized mean curvature vector field, canonical parameters, meridian surfaces}

\begin{abstract}
We study surfaces with parallel normalized mean curvature vector field in Euclidean or Minkowski 4-space. On any such surface  we introduce special isothermal  parameters (canonical parameters) and describe  these surfaces in terms of three invariant functions. 
 We prove that any surface with parallel normalized mean curvature vector field parametrized by canonical parameters is  determined uniquely up to a motion in Euclidean (or  Minkowski) space by the three invariant functions  satisfying a system of three partial differential equations.  We find examples of surfaces with parallel normalized mean curvature vector field and solutions to the corresponding  systems of PDEs in  Euclidean  or Minkowski space  in the class of the meridian surfaces.
\end{abstract}

\maketitle

\section{Introduction}

A basic class of surfaces in Riemannian and pseudo-Riemannian geometry are surfaces with parallel mean curvature vector field, since they are critical points of some natural functionals and  play important role in differential geometry,  the theory of harmonic maps, as well as in physics. Surfaces with parallel mean curvature vector field in Riemannian space forms were classified in the early 1970s by Chen \cite{Chen1} and Yau  \cite{Yau}. Recently, spacelike surfaces with parallel mean
curvature vector field  in pseudo-Euclidean spaces with arbitrary codimension  were classified in \cite{Chen1-2}.
Lorentz surfaces with parallel mean curvature vector field in arbitrary pseudo-Euclidean space $\E^m_s$ are studied in \cite{Chen-KJM, Fu-Hou}. A survey on classical and recent results on submanifolds with parallel mean curvature vector in Riemannian manifolds
as well as in pseudo-Riemannian manifolds is presented in \cite{Chen-survey}.

The class of surfaces with parallel mean curvature vector field is naturally extended to the class of surfaces with parallel
normalized mean curvature vector field.
A submanifold in a Riemannian manifold is said to have parallel
normalized mean curvature vector field  if the mean curvature vector is non-zero and the
unit vector in the direction of the mean curvature vector is parallel in the normal
bundle  \cite{Chen-MM}.
It is well known that submanifolds with non-zero parallel mean curvature vector field also have parallel normalized mean curvature vector field.
 But the condition to have parallel normalized mean curvature vector field is  weaker than the condition to have parallel mean curvature vector field.
Every surface in the Euclidean 3-space has parallel normalized mean
curvature vector field but in the 4-dimensional Euclidean space, there exist examples of surfaces
with parallel normalized mean
curvature vector field, but  with non-parallel mean curvature vector field. In \cite{Chen-MM} it is
proved that every analytic surface with parallel normalized mean curvature
vector in the Euclidean space $\mathbb{E}^{m}$ must either lie in a
4-dimensional space $\mathbb{E}^{4}$ or in a hypersphere of $\mathbb{E}^{m}$
as a minimal surface.

Spacelike submanifolds with parallel normalized mean curvature vector field in a general de Sitter space are studied in \cite{Shu}.
 It is shown that compact spacelike submanifolds whose mean curvature does not vanish and whose corresponding normalized vector field is parallel, must be, under some suitable geometric assumptions, totally umbilical.

In \cite{AGM} we studied the local theory of Lorentz surfaces with parallel normalized mean curvature vector field in the pseudo-Euclidean space with neutral metric $\E^4_2$.  Introducing special geometric parameters (called canonical parameters) on each  such surface, we  described the Lorentz surfaces with parallel normalized mean curvature vector field in terms of three invariant functions satisfying a system of three partial differential equations. 

In the present paper we study surfaces with parallel normalized mean curvature vector field in the Euclidean 4-space $\E^4$ and the Minkowski 4-space $\E^4_1$. We introduce canonical parameters on each such surface that allow us to describe  these surfaces in terms of three invariant functions. We prove that any  surface with  parallel normalized mean curvature vector field is determined up to a motion in $\E^4$ by three functions $\lambda(u,v)$, $\mu(u,v)$ and $\nu(u,v)$ satisfying the following  system of partial differential equations 
\begin{equation} \label{E:Eq0-1} 
\begin{array}{l}
\vspace{2mm}
\nu_u = \lambda_v - \lambda (\ln|\mu|)_v;\\
\vspace{2mm}
\nu_v = \lambda_u - \lambda (\ln|\mu|)_u;\\
\vspace{2mm}
\nu^2 - (\lambda^2 + \mu^2) = \frac{1}{2}|\mu| \Delta \ln |\mu|,
\end{array} 
\end{equation}
where $\Delta$ denotes the Laplace operator.

The class of spacelike surfaces with parallel normalized mean curvature vector field in the Minkowski space $\E^4_1$ is described by three functions 
 $\lambda(u,v)$, $\mu(u,v)$ and $\nu(u,v)$ satisfying the following  system of partial differential equations 
\begin{equation}  \label{E:Eq0-2}
\begin{array}{l}
\vspace{2mm}
\nu_u = \lambda_v - \lambda (\ln|\mu|)_v;\\
\vspace{2mm}
\nu_v = \lambda_u - \lambda (\ln|\mu|)_u;\\
\vspace{2mm}
\varepsilon(\nu^2 - \lambda^2 + \mu^2) = \frac{1}{2}|\mu| \Delta \ln |\mu|,
\end{array} 
\end{equation}
where $\varepsilon = 1$ corresponds to the case the mean curvature vector field is spacelike, $\varepsilon = - 1$ corresponds to the case  the mean curvature vector field is timelike.

Examples of surfaces with parallel normalized mean curvature vector field  in $\E^4$ and $\E^4_1$  can be found in the class of the so-called meridian surfaces -- two-dimensional surfaces which are one-parameter systems of meridians of a  rotational hypersurface \cite{GM-BKMS, GM-MC, GM-GIQ}. In the Euclidean space $\E^4$ there is one type of meridian surfaces, while in the Minkowski space $\E^4_1$ we distinguish three types of meridian surfaces depending on the casual character of the rotational axis (spacelike, timelike, or lightlike). 
The functions $\lambda(u,v)$, $\mu(u,v)$, $\nu(u,v)$ of each meridian surface with parallel normalized mean curvature vector field parametrized by canonical parameters $(u,v)$ in $\E^4$ or $\E^4_1$ give a solution to the system of partial differential equations \eqref{E:Eq0-1} or \eqref{E:Eq0-2}, respectively.
In Section \ref{S:Ex} we give explicit examples of solutions to systems \eqref{E:Eq0-1} and  \eqref{E:Eq0-2}.

\section{Preliminaries} \label{S:Pre}

Let $M^2$ be a 2-dimensional surface in the Euclidean space $\E^4$ or a spacelike surface in the Minkowski space $\E^4_1$.  
We denote by $\nabla$ and $\nabla'$ the Levi Civita connections of $M^2$  and $\E^4$ (or $\E^4_1$), respectively.
For any tangent vector fields $x$, $y$  and any normal vector field $\xi$ of $M^2$, the formulas of Gauss and Weingarten are given respectively by \cite{Chen1}:
$$\begin{array}{l}
\vspace{2mm}
\nabla'_xy = \nabla_xy + \sigma(x,y);\\
\vspace{2mm}
\nabla'_x \xi = - A_{\xi} x + D_x \xi.
\end{array}$$
These formulas define the second fundamental form $\sigma$, the normal
connection $D$, and the shape operator $A_{\xi}$ with respect to
$\xi$.  The shape operator $A_{\xi}$ is a symmetric endomorphism of the tangent space $T_pM^2$ at $p \in M^2$.
  
The mean curvature vector  field $H$ of $M^2$ in $\E^4$ (or $\E^4_1$)
is defined as $$H = \ds{\frac{1}{2}\,  \tr\, \sigma}.$$

A normal vector field $\xi$ on $M^2$ is called \emph{parallel in the normal bundle} (or simply \emph{parallel}) if $D{\xi}=0$ holds identically \cite{Chen2}.
The surface  is said to have \emph{parallel mean curvature vector field} if its mean curvature vector $H$ satisfies $D H =0$ identically.

Surfaces for which the mean curvature vector $H$ is non-zero and  there exists a unit vector field $b$ in the direction of  $H$, such that $b$ is parallel in the normal bundle, are called surfaces with \textit{parallel normalized mean curvature vector field} \cite{Chen-MM}.
It is easy to see  that if $M^2$ is a surface  with non-zero parallel mean curvature vector field $H$ (i.e. $DH = 0$),
then $M^2$ is a surface with parallel normalized mean curvature vector field, but the converse is not true in general.
It is true only in the case $\Vert H \Vert = const$.

\section{Canonical parameters on surfaces with parallel normalized mean curvature vector field in $\E^4$}

Let $M^2: z = z(u,v), \, \, (u,v) \in {\mathcal D}$ (${\mathcal D}
\subset \R^2$) be a local parametrization of a surface free of minimal points in the Euclidean 4-space $\E^4$. In \cite{GM2} we defined principal lines and introduced a geometrically determined orthonormal frame field  $\{x, y, b, l\}$ at each point of the surface which is defined by the principal lines and the mean curvature vector field $H$.
With respect to this frame field  we have the following Frenet-type formulas:
\begin{equation} \label{E:Eq1}
\begin{array}{ll}
\vspace{1mm} \nabla'_xx=\quad \quad \quad \gamma_1\,y+\,\nu_1\,b;
& \qquad
\nabla'_xb=-\nu_1\,x-\lambda\,y\quad\quad \quad +\beta_1\,l;\\
\vspace{1mm} \nabla'_xy=-\gamma_1\,x\quad \quad \; + \; \lambda\,b
\; + \mu\,l;  & \qquad
\nabla'_yb=-\lambda\,x - \; \nu_2\,y\quad\quad \quad +\beta_2\,l;\\
\vspace{1mm} \nabla'_yx=\quad\quad \;-\gamma_2\,y \; + \lambda\,b
\; +\mu\,l;  & \qquad
\nabla'_xl= \quad \quad \quad \;-\mu\,y-\beta_1\,b;\\
\vspace{1mm} \nabla'_yy=\;\;\gamma_2\,x \quad\quad\quad+\nu_2\,b;
& \qquad \nabla'_yl=-\mu\,x \quad \quad \quad \;-\beta_2\,b,
\end{array}
\end{equation}
\noindent
where  $\gamma_1, \, \gamma_2, \, \nu_1,\, \nu_2, \,
\lambda, \, \mu, \, \beta_1$, $\beta_2$ are functions on $M^2$ determined by the
geometric frame field  as follows:
\begin{equation} \label{E:Eq2}
\begin{array}{l}
\vspace{2mm}
\nu_1 = \langle \nabla'_xx, b\rangle, \qquad \nu_2 = \langle \nabla'_yy, b\rangle, \qquad \, \lambda = \langle \nabla'_xy, b\rangle,
\qquad \mu = \langle \nabla'_xy, l\rangle,\\
\vspace{2mm}
\gamma_1 =  \langle \nabla'_xx, y\rangle,  \qquad  \gamma_2 =  \langle \nabla'_yy, x\rangle, \qquad \beta_1 = \langle \nabla'_xb, l\rangle, \qquad
\beta_2 = \langle \nabla'_yb, l\rangle.
\end{array}
\end{equation}
We call these functions \textit{geometric functions} of the surface since they determine the surface up to a rigid motion in $\E^4$.

We considered the general class of surfaces for which $\mu_u \,\mu_v \neq 0$ and for this class of surfaces we proved the fundamental existence and uniqueness theorem  in terms of their geometric functions.
The theorem states:

\begin{thm}\label{T:Fund-Theorem}
\cite{GM2}
Let $\gamma_{1},\,\gamma_{2},\,\nu_{1},\,\nu_{2},\,\lambda,\,\mu,\,\beta_{1},\beta_{2}$
be smooth functions, defined in a domain $\mathcal{D},\,\,\mathcal{D}\subset{\R}^{2}$,
and satisfying the conditions
\begin{equation*}
\begin{array}{l}
\vspace{2mm}
\displaystyle{\frac{\mu_{u}}{2\mu\,\gamma_{2}+\nu_{1}\,\beta_{2}-\lambda\,\beta_{1}}>0}; \qquad \displaystyle{\frac{\mu_{v}}{2\mu\,\gamma_{1}+\nu_{2}\,\beta_{1}-\lambda\,\beta_{2}}>0};\\
\vspace{2mm}
- \gamma_1 \sqrt{E} \sqrt{G} = (\sqrt{E})_v; \qquad \quad- \gamma_2 \sqrt{E} \sqrt{G} = (\sqrt{G})_u;\\
\vspace{2mm} \nu_1 \,\nu_2 - (\lambda^2 + \mu^2) =
\displaystyle{\frac{1}{\sqrt{E}}\,(\gamma_2)_u
+ \frac{1}{\sqrt{G}}\,(\gamma_1)_v - \left((\gamma_1)^2 + (\gamma_2)^2\right)};\\
\vspace{2mm} 2\lambda\, \gamma_2 + \mu\,\beta_1 - (\nu_1 -
\nu_2)\,\gamma_1 =
\displaystyle{\frac{1}{\sqrt{E}}\,\lambda_u - \frac{1}{\sqrt{G}}\,(\nu_1)_v};\\
\vspace{2mm} 2\lambda\, \gamma_1 + \mu\,\beta_2 + (\nu_1 -
\nu_2)\,\gamma_2 =
\displaystyle{ - \frac{1}{\sqrt{E}}\,(\nu_2)_u + \frac{1}{\sqrt{G}}\,\lambda_v};\\
\gamma_1\,\beta_1 - \gamma_2\,\beta_2 + (\nu_1 - \nu_2)\,\mu  =
\displaystyle{ - \frac{1}{\sqrt{E}}\,(\beta_2)_u +
\frac{1}{\sqrt{G}}\,(\beta_1)_v}.
\end{array}
\end{equation*}
where $\vspace{2mm} \sqrt{E}={\ds\frac{\mu_u}{2\mu\,\gamma_{2}+\nu_{1}\,\beta_{2}-\lambda\,\beta_{1}}},
\sqrt{G}={\ds\frac{\mu_v}{2\mu\,\gamma_{1}+\nu_{2}\,\beta_{1}-\lambda\,\beta_{2}}}$.
Let $\{x_0, \, y_0, \, b_0,\, l_0\}$ be an
orthonormal frame at a point $p_0 \in \R^4$. Then there exist a
subdomain ${\mathcal{D}}_0 \subset \mathcal{D}$ and a unique
surface $M^2: z = z(u,v), \,\, (u,v) \in {\mathcal{D}}_0$, passing
through $p_0$, such that $\{x_0, \, y_0, \, b_0,\, l_0\}$ is the
geometric frame of $M^2$ at the point $p_0$ and $\gamma_1, \, \gamma_2, \, \nu_1,\,
\nu_2, \, \lambda, \, \mu, \, \beta_1, \beta_2$ are the geometric
functions of $M^2$.
\end{thm}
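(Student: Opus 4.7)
I would follow the classical strategy for fundamental theorems of surface theory: read \eqref{E:Eq1} as a first-order linear PDE system for a moving frame, apply the Frobenius theorem using the listed compatibility conditions as integrability, and then integrate once more to recover the position vector. The first two inequalities in the hypothesis guarantee that $\sqrt{E}$ and $\sqrt{G}$, as defined in the statement, are strictly positive on a neighborhood $\mathcal D_0$ of a preimage of $p_0$, so one may consistently set $\partial_u = \sqrt{E}\, x$ and $\partial_v = \sqrt{G}\, y$ as coordinate vector fields. Relative to the tuple $\Phi = (x, y, b, l)^T$, formulas \eqref{E:Eq1} then rewrite as $\partial_u \Phi = \mathcal A\, \Phi$ and $\partial_v \Phi = \mathcal B\, \Phi$ with smooth $4 \times 4$ scalar matrices $\mathcal A, \mathcal B$ that a direct check shows are skew-symmetric; this skew-symmetry is what forces the frame to remain orthonormal throughout the integration.

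The Frobenius theorem then provides a unique solution $\Phi$ on some $\mathcal D_0 \subset \mathcal D$ with initial value $(x_0, y_0, b_0, l_0)$ at $p_0$ precisely when the integrability condition $\partial_v \mathcal A - \partial_u \mathcal B = [\mathcal A, \mathcal B]$ holds. Since both sides lie in $\mathfrak{so}(4)$, expanding this single matrix identity produces six independent scalar equations, which a bookkeeping computation matches to the six scalar hypotheses: the two equations $(\sqrt{E})_v = -\gamma_1 \sqrt{EG}$ and $(\sqrt{G})_u = -\gamma_2 \sqrt{EG}$ encode the Gauss relations between the diagonal first fundamental form and its Christoffel symbols; the equation containing $\nu_1 \nu_2 - (\lambda^2 + \mu^2)$ is the Gauss equation for the intrinsic curvature; the two equations with $2\lambda \gamma_i$ on the left are the Codazzi equations for the $b$- and $l$-components of $\sigma$; and the equation with $\gamma_1 \beta_1 - \gamma_2 \beta_2$ is the Ricci equation for the normal curvature.

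With the frame in hand, I would reconstruct the immersion by integrating the vector system $z_u = \sqrt{E}\, x$, $z_v = \sqrt{G}\, y$ starting from $z(p_0) = p_0$. The mixed-partial compatibility $(\sqrt{E}\, x)_v = (\sqrt{G}\, y)_u$ decomposes in the $\{x, y, b, l\}$-frame into four scalar identities: the $x$- and $y$-components recover the two $\gamma_i \sqrt{EG}$ hypotheses, while the $b$- and $l$-components cancel automatically because the common normal parts of $\nabla'_x y$ and $\nabla'_y x$ in \eqref{E:Eq1} reflect the symmetry of $\sigma$. The resulting $z$ is then an immersion with first fundamental form $E\, du^2 + G\, dv^2$, orthonormal normal frame $\{b, l\}$, and geometric functions \eqref{E:Eq2} agreeing with the prescribed ones; uniqueness is inherited from Frobenius.

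The main technical obstacle is the identification step in the middle paragraph: certifying that the single Frobenius identity $\partial_v \mathcal A - \partial_u \mathcal B = [\mathcal A, \mathcal B]$, read off the explicit skew matrices, decomposes into exactly the six listed scalar hypotheses, with no missing relations and no hidden redundancies beyond those already imposed by $\mathfrak{so}(4)$. This computation is mechanical but lengthy, and the asymmetric roles played by $\gamma_1, \gamma_2$ and by $\beta_1, \beta_2$ in \eqref{E:Eq1} mean each matrix entry must be tracked carefully before the final organization into Gauss, Codazzi and Ricci equations becomes visible.
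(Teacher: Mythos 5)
The paper offers no proof of Theorem~\ref{T:Fund-Theorem} at all: it is imported verbatim by citation from \cite{GM2}, so there is nothing in this text to compare against. Your plan is the standard Bonnet-type argument (Frobenius for the $\mathfrak{so}(4)$-valued frame system, followed by a second integration for the position vector), which is certainly the route of \cite{GM2}, and the overall architecture is sound. One genuine bookkeeping error, though, sits exactly at the step you flag as the main obstacle. The two hypotheses $-\gamma_1\sqrt{E}\sqrt{G}=(\sqrt{E})_v$ and $-\gamma_2\sqrt{E}\sqrt{G}=(\sqrt{G})_u$ are \emph{not} among the six scalar components of the frame integrability identity $\partial_v\mathcal A-\partial_u\mathcal B=[\mathcal A,\mathcal B]$; they are precisely the $x$- and $y$-components of the position-vector compatibility $(\sqrt{E}\,x)_v=(\sqrt{G}\,y)_u$, which you correctly rederive in your third paragraph, so you are double-counting them. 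The two components of the frame identity you are then missing are the Codazzi relations for the $l$-part of $\sigma$, namely $x(\mu)=2\mu\gamma_2+\nu_1\beta_2-\lambda\beta_1$ and $y(\mu)=2\mu\gamma_1+\nu_2\beta_1-\lambda\beta_2$ (the first two equations of \eqref{E:Eq4}); in the hypotheses of the theorem these do not appear as displayed equalities but are hidden inside the definitions $\sqrt{E}=\mu_u/(2\mu\gamma_2+\nu_1\beta_2-\lambda\beta_1)$, $\sqrt{G}=\mu_v/(2\mu\gamma_1+\nu_2\beta_1-\lambda\beta_2)$ together with the positivity conditions. If you carry out the matching as described you will find yourself two equations short and two over; once the roles are reassigned as above the six-to-six correspondence closes and the argument goes through. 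A final small point: to conclude that the prescribed functions are \emph{the} geometric functions of the constructed surface, you should also verify that the integrated frame coincides with the geometrically determined one of \cite{GM2} (that $x,y$ are the principal directions and $b$ points along $H=\frac{\nu_1+\nu_2}{2}\,b$), which follows from \eqref{E:Eq2} but deserves an explicit sentence.
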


Hence, any surface of the general class is determined up to a rigid motion  in $\E^4$ by the  eight geometric functions $\gamma_1, \, \gamma_2, \, \nu_1,\, \nu_2, \, \lambda, \, \mu, \, \beta_1$, $\beta_2$ satisfying some natural conditions.

\vskip 2mm
Now we focus our attention on the class of surfaces with parallel normalized mean curvature vector field.  
The mean curvature vector field $H$ is expressed as
\begin{equation}  \label{E:Eq3}
H = \ds{\frac{\nu_1 + \nu_2}{2}\, b}.
\end{equation}
Using \eqref{E:Eq3} and \eqref{E:Eq1} we obtain that 
\begin{equation}  \notag
\begin{array}{l}
\vspace{2mm} 
D_x H = \ds{x\left(\frac{\nu_1 + \nu_2}{2}\right)  b +\beta_1\,l};\\
\vspace{2mm} 
D_y H = \ds{y\left(\frac{\nu_1 + \nu_2}{2}\right)  b +\beta_2\,l}.
\end{array}
\end{equation}
Hence, the mean curvature vector field $H$ is parallel if and only if $\beta_1 = \beta_2 =0$ and $\nu_1 + \nu_2 = const$.
The normalized mean curvature vector field of $M^2$ is $b$. It follows from \eqref{E:Eq1}  that $b$ is parallel in the normal bundle if and only if $\beta_1 = \beta_2 =0$. 

We shall study surfaces  with parallel normalized mean curvature vector field, but  with non-parallel mean curvature vector field. 
They are characterized by the conditions 
$\beta_1 = \beta_2 =0$, $\nu_1 +\nu_2  \neq const$. For these surfaces we  shall introduce special isothermal parameters that allow us to formulate the fundamental existence and uniqueness theorem in terms of three geometric functions.

\begin{defn}\label{D:Canonocal parameters}
Let $M^2$ be a surface with parallel normalized mean
curvature vector field. The parameters $(u,v)$ of $M^2$ are said to be \textit{canonical}, if
$$E(u,v) = \ds{\frac{1}{|\mu (u,v)|}};\qquad F(u,v) = 0; \qquad G(u,v) = \ds{\frac{1}{|\mu (u,v)|}}.$$
\end{defn}

\begin{thm}\label{T:Canonocal parameters}
Each  surface with  parallel normalized mean
curvature vector field in $\E^4$ locally admits  canonical parameters. 
\end{thm}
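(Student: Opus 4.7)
The plan is to produce canonical parameters by rescaling arbitrary isothermal coordinates via a suitably chosen conformal change of variable. By the classical local existence of isothermal parameters on any $2$-dimensional Riemannian surface, every point of $M^2$ lies in a chart $(\tilde u, \tilde v)$ with $ds^2 = e^{2\omega}(d\tilde u^2 + d\tilde v^2)$ for some smooth function $\omega$. Since a canonical parametrization is itself isothermal (with conformal factor $1/|\mu|$), any two such charts are related by a holomorphic change $\tilde z = f(z)$ with $\tilde z = \tilde u + i\tilde v$ and $z = u + iv$. The new conformal factor equals $e^{2\omega}|f'(z)|^2$, so the requirement is $|f'(z)|^2 = e^{-2\omega}/|\mu|$; such a local holomorphic $f$ exists if and only if $2\omega + \log|\mu|$ is harmonic in $(\tilde u,\tilde v)$.

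The harmonicity will follow from the Codazzi equations specialized by the hypothesis of parallel normalized mean curvature. The conditions $\beta_1 = \beta_2 = 0$ from \eqref{E:Eq1} say that the entire normal frame $\{b, l\}$ is parallel in the normal bundle; hence the normal bundle is flat and the full Codazzi equation decouples into independent Codazzi equations for the scalar symmetric $(0,2)$-tensors $\sigma^b := \langle \sigma, b \rangle$ and $\sigma^l := \langle \sigma, l \rangle$. Since $H$ is a multiple of $b$ by assumption, $\sigma^l$ is trace-free. A direct computation using the Christoffel symbols of the isothermal metric reduces the Codazzi equations for such a trace-free symmetric tensor $T$ to $\partial_{\tilde u} T_{12} = \partial_{\tilde v} T_{11}$ and $\partial_{\tilde u} T_{11} = -\partial_{\tilde v} T_{12}$, i.e.\ to the Cauchy--Riemann equations for $T_{11} - iT_{12}$, which is therefore holomorphic in $\tilde z$.

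It remains to compare this holomorphic function with the invariant $\mu$. In the principal frame $\{x, y\}$, the decomposition $\sigma(x,y) = \lambda b + \mu l$ from \eqref{E:Eq2} gives $A_l = \bigl(\begin{smallmatrix} 0 & \mu \\ \mu & 0 \end{smallmatrix}\bigr)$, so the Frobenius norm of $\sigma^l$, being invariant under change of orthonormal frame, equals $2\mu^2$. Passing from the coordinate basis $\{\partial_{\tilde u}, \partial_{\tilde v}\}$ to the associated orthonormal basis multiplies each component of $\sigma^l$ by $e^{2\omega}$, whence $T_{11}^2 + T_{12}^2 = e^{4\omega}\mu^2$. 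Because the squared modulus of a non-vanishing holomorphic function has harmonic logarithm, $\Delta(4\omega + 2\log|\mu|) = 0$, and $2\omega + \log|\mu|$ is harmonic, as required. Integrating the resulting holomorphic primitive produces $f$, and the new coordinates $(u,v)$ satisfy $E = G = 1/|\mu|$ and $F = 0$.

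The main obstacle is the third step above, namely to faithfully translate the invariant $|\mu|$ into components of $\sigma^l$ in an arbitrary isothermal chart while carefully tracking the conformal factor, so that the holomorphicity of the traceless Hopf-type object $T_{11}-iT_{12}$ can be converted into the required harmonicity of $2\omega + \log|\mu|$. The argument is purely local and is carried out on the open set where $\mu \neq 0$, which is the natural domain of definition for canonical parameters.
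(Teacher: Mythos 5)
Your proof is correct, but it follows a genuinely different route from the paper's. The paper never leaves the principal-line parametrization: it writes the integrability conditions \eqref{E:Eq4} in the geometric frame, specializes to $\beta_1=\beta_2=0$, and reads off from the first two equations of \eqref{E:Eq5}, together with $\gamma_1=-y(\ln\sqrt{E})$ and $\gamma_2=-x(\ln\sqrt{G})$, that $E|\mu|$ depends only on $u$ and $G|\mu|$ only on $v$; a separate reparametrization $\bar u=\int\sqrt{\varphi(u)}\,du$, $\bar v=\int\sqrt{\psi(v)}\,dv$ of each variable then yields $\overline{E}=\overline{G}=1/|\mu|$, $\overline{F}=0$. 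You instead start from arbitrary isothermal coordinates (invoking the Korn--Lichtenstein theorem), observe that $Dl=0$ makes $\sigma^l$ a trace-free Codazzi tensor, hence a holomorphic quadratic differential $\Phi\,d\tilde z^{\,2}$ with $|\Phi|=e^{2\omega}|\mu|$, and conclude that $2\omega+\log|\mu|=\log|\Phi|$ is harmonic where $\mu\neq 0$, which is precisely the integrability condition for a conformal change of chart achieving the factor $1/|\mu|$. The two arguments ultimately rest on the same Codazzi identities, but yours is more conceptual: it identifies canonical parameters as the natural parameters in which the Hopf-type differential $\sigma^l(\partial_z,\partial_z)\,dz^2$ has unit modulus, whereas the paper's is more elementary and self-contained, needing only the orthogonal principal parametrization already available from \cite{GM2} rather than the existence of isothermal coordinates. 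Two cosmetic points: the holomorphic change is more cleanly set up in the direction $z=F(\tilde z)$ with $|F'|^2=e^{2\omega}|\mu|$, since all the data live in the $\tilde z$-chart and one avoids expressing the condition in coordinates that are still to be constructed; and passing from the coordinate basis to the associated orthonormal basis multiplies the components of $\sigma^l$ by $e^{-2\omega}$, not $e^{2\omega}$ --- your final identity $T_{11}^2+T_{12}^2=e^{4\omega}\mu^2$ is nevertheless the correct one.
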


\begin{proof}
Using the Gauss and Codazzi equations, from \eqref{E:Eq1} we obtain that the geometric functions $\gamma_1, \, \gamma_2, \, \nu_1,\, \nu_2, \,
\lambda, \, \mu, \, \beta_1$, $\beta_2$ of a surface free of minimal points satisfy the following integrability
conditions:
\begin{equation}  \label{E:Eq4}
\begin{array}{l}
\vspace{2mm}
2\mu\, \gamma_2 + \nu_1\,\beta_2 - \lambda\,\beta_1 = x(\mu);\\
\vspace{2mm}
2\mu\, \gamma_1 - \lambda\,\beta_2 + \nu_2\,\beta_1 = y(\mu);\\
\vspace{2mm}
2\lambda\, \gamma_2 + \mu\,\beta_1 - (\nu_1 - \nu_2)\,\gamma_1 = x(\lambda) - y(\nu_1);\\
\vspace{2mm}
2\lambda\, \gamma_1 + \mu\,\beta_2 + (\nu_1 - \nu_2)\,\gamma_2 = - x(\nu_2) + y(\lambda);\\
\vspace{2mm}
\gamma_1\,\beta_1 - \gamma_2\,\beta_2 + (\nu_1 - \nu_2)\,\mu  = -
x(\beta_2) + y(\beta_1); \\
\vspace{2mm}
\nu_1 \,\nu_2 - (\lambda^2 + \mu^2) = x(\gamma_2) + y(\gamma_1) - \left((\gamma_1)^2 + (\gamma_2)^2\right).
\end{array}
\end{equation}

Putting $\beta_1 = \beta_2 =0$ in formulas \eqref{E:Eq4}, we get
\begin{equation}  \label{E:Eq5}
\begin{array}{l}
\vspace{2mm}
2\mu\, \gamma_2 = x(\mu);\\
\vspace{2mm}
2\mu\, \gamma_1 = y(\mu);\\
\vspace{2mm}
2\lambda\, \gamma_2 - (\nu_1 - \nu_2)\,\gamma_1 = x(\lambda) - y(\nu_1);\\
\vspace{2mm}
2\lambda\, \gamma_1 + (\nu_1 - \nu_2)\,\gamma_2 = - x(\nu_2) + y(\lambda);\\
\vspace{2mm}
(\nu_1 - \nu_2)\,\mu = 0; \\
\vspace{2mm}
\nu_1 \,\nu_2 - (\lambda^2 + \mu^2) = x(\gamma_2) + y(\gamma_1) - \left((\gamma_1)^2 + (\gamma_2)^2\right).
\end{array}
\end{equation}

The first and second equalities of  \eqref{E:Eq5} imply $\gamma_1 = \ds{\frac{1}{2} y(\ln |\mu|)}; \,\, \gamma_2 = \ds{\frac{1}{2} x(\ln |\mu|)}$. On the other hand,  from \eqref{E:Eq2} it follows that $\gamma_1 = - y(\ln \sqrt{E}), \,\, \gamma_2 = - x(\ln \sqrt{G})$. Hence, $x(\ln|\mu| E) = 0$ and $y(\ln|\mu| G) = 0$, which imply that  
$E |\mu|$ does not depend on $v$, and $G |\mu|$ does not depend on $u$.
Hence,
there exist functions $\varphi(u) >0 $ and $ \psi(v) >0$, such that
$$E |\mu| = \varphi(u); \qquad G|\mu| = \psi(v).$$
Under the following change of the parameters:
\begin{equation*}  \label{E:Eq6}
\begin{array}{l}
\vspace{2mm}
\overline{u} = \ds{\int_{u_0}^u \sqrt{\varphi(u)}\, du}
+ \overline{u}_0, \quad \overline{u}_0 = const\\
[2mm]
\overline{v} = \ds{\int_{v_0}^v  \sqrt{\psi(v)}\, dv + \overline{v}_0},
\quad \overline{v}_0 = const
\end{array}
\end{equation*}
we obtain
$$\overline{E} = \ds{\frac{1}{|\mu|}}; \qquad \overline{F} = 0;
\qquad \overline{G} = \ds{\frac{1}{|\mu|}},$$
which imply that the  parameters $(\overline{u},\overline{v})$ are canonical.

\end{proof}

\vskip 3mm
\noindent
\textbf{Remark}: Since $\mu \neq 0$ for surfaces free of minimal points, from the fifth equality of \eqref{E:Eq5} we get that $\nu_1 = \nu_2$. Let us denote $\nu = \nu_1 = \nu_2$. 

\vskip 3mm
Now we suppose that $M^2: z = z(u,v), \,\, (u,v) \in {\mathcal D}$ is a surface with  parallel normalized mean
curvature vector field  parametrized by canonical parameters $(u,v)$.
It follows from the first two equalities in \eqref{E:Eq5}  that the functions $\gamma_1$ and $\gamma_2$
are expressed by:
$$\gamma_1 = \left(\sqrt{|\mu|}\right)_v; \qquad \gamma_2
= \left(\sqrt{|\mu|}\right)_u.$$
The third and fourth equalities of \eqref{E:Eq5} imply the following partial differential equations:
\begin{equation}  \notag
\begin{array}{l}
\vspace{2mm}
\nu_u = \lambda_v - \lambda (\ln|\mu|)_v;\\
\vspace{2mm}
\nu_v = \lambda_u - \lambda (\ln|\mu|)_u.\\
\end{array}
\end{equation}
The last  equality of  \eqref{E:Eq5}  implies 
\begin{equation}  \notag
\nu^2 - (\lambda^2 + \mu^2) = \frac{1}{2}|\mu| \Delta \ln |\mu|,
\end{equation}
where $\Delta = \ds{\frac{\partial^2}{\partial u^2} + \frac{\partial^2}{\partial v^2}}$
is the Laplace operator.

\vskip 3mm
Now we shall formulate the fundamental existence and uniqueness theorem for the class of
 surfaces  with  parallel normalized mean curvature vector field in terms of canonical parameters.

\begin{thm}\label{T:Fundamental Theorem}
Let $\lambda(u,v)$, $\mu(u,v)$ and $\nu(u,v)$ be  smooth functions, defined in a domain
${\mathcal D}, \,\, {\mathcal D} \subset {\R}^2$, and satisfying the conditions
\begin{equation} \label{E:Eq7}
\begin{array}{l}
\vspace{2mm}
\mu  \neq 0, \quad \nu \neq const;\\
\vspace{2mm}
\nu_u = \lambda_v - \lambda (\ln|\mu|)_v;\\
\vspace{2mm}
\nu_v = \lambda_u - \lambda (\ln|\mu|)_u;\\
\vspace{2mm}
\nu^2 - (\lambda^2 + \mu^2) = \frac{1}{2}|\mu| \Delta \ln |\mu|.
\end{array} 
\end{equation}
If $\{x_0, \, y_0, \, b_0,\, l_0\}$ is an orthonormal frame at
a point $p_0 \in \E^4$, then there exists a subdomain ${\mathcal D}_0 \subset {\mathcal D}$
and a unique surface
$M^2: z = z(u,v), \,\, (u,v) \in {\mathcal D}_0$  with parallel normalized mean curvature vector field, such that $M^2$ passes through $p_0$, $\{x_0, \, y_0, \, b_0,\, l_0\}$ is the geometric
frame of $M^2$ at the point $p_0$, and  the functions   $\lambda(u,v)$, $\mu(u,v)$, $\nu(u,v)$ are the geometric functions
of $M^2$.
Furthermore, $(u,v)$ are canonical parameters of $M^2$.
\end{thm}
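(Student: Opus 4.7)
My plan is to derive the theorem as an application of the general existence and uniqueness result, Theorem \ref{T:Fund-Theorem}, by prescribing the remaining geometric functions from the canonical-parameter assumption and verifying that its integrability conditions collapse onto the system \eqref{E:Eq7}.

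Motivated by the reductions that led from \eqref{E:Eq4} to \eqref{E:Eq5} and from there to the canonical-parameter normalization, I would set
\[
\nu_1 := \nu, \qquad \nu_2 := \nu, \qquad \beta_1 := 0, \qquad \beta_2 := 0, \qquad \gamma_1 := \bigl(\sqrt{|\mu|}\bigr)_v, \qquad \gamma_2 := \bigl(\sqrt{|\mu|}\bigr)_u.
\]
Plugging these into the formulas of Theorem \ref{T:Fund-Theorem} gives $\sqrt{E} = \sqrt{G} = 1/\sqrt{|\mu|}$, and the opening two positivity conditions both reduce to $1/\sqrt{|\mu|} > 0$. The two equations involving $(\sqrt{E})_v$ and $(\sqrt{G})_u$ become tautologies (they are the very definitions of $\gamma_1$ and $\gamma_2$). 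The Ricci-type equation for $\beta_1, \beta_2$ becomes $0 = 0$ thanks to $\nu_1 = \nu_2$ and $\beta_i = 0$. Each Codazzi-type equation for $\lambda, \nu_1, \nu_2$, after dividing by $\sqrt{|\mu|}$ and recognizing $2(\sqrt{|\mu|})_u/\sqrt{|\mu|} = (\ln|\mu|)_u$, reduces to one of the first two PDEs in \eqref{E:Eq7}.

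The one genuine calculation is the Gauss equation; with the substitutions above its right-hand side becomes $\sqrt{|\mu|}\,\Delta\sqrt{|\mu|} - |\nabla\sqrt{|\mu|}|^2$, and the identity
\[
\sqrt{|\mu|}\,\Delta\sqrt{|\mu|} - |\nabla\sqrt{|\mu|}|^2 \;=\; \tfrac{1}{2}|\mu|\,\Delta\ln|\mu|,
\]
obtained from $\Delta\ln f = \Delta f / f - |\nabla f|^2/f^2$ applied to $f = \sqrt{|\mu|}$, matches it with the third equation of \eqref{E:Eq7}. This is the main obstacle — the only place where a nontrivial algebraic identity is required and where one must carry the absolute value through consistently. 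With all the integrability conditions of Theorem \ref{T:Fund-Theorem} verified, that theorem produces a subdomain $\mathcal{D}_0 \subset \mathcal{D}$ and a unique surface $M^2$ through $p_0$ with initial geometric frame $\{x_0, y_0, b_0, l_0\}$ and the prescribed geometric functions. It then remains to read off the geometric structure of $M^2$ from \eqref{E:Eq1}: since $\beta_1 = \beta_2 = 0$, the direction $b$ is parallel in the normal bundle; since $H = \nu b$ and $\nu$ is nonconstant, $H$ itself is not parallel, placing $M^2$ in the stated class; and $E = G = 1/|\mu|$, $F = 0$ says the parameters are canonical. Uniqueness is inherited directly from Theorem \ref{T:Fund-Theorem}.
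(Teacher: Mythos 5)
Your proposal is correct and is essentially the paper's own (implicit) argument: the paper states Theorem \ref{T:Fundamental Theorem} without a separate proof, relying on exactly the reduction you carry out, namely substituting $\beta_1=\beta_2=0$, $\nu_1=\nu_2=\nu$, $\gamma_1=\bigl(\sqrt{|\mu|}\bigr)_v$, $\gamma_2=\bigl(\sqrt{|\mu|}\bigr)_u$ into the integrability conditions of Theorem \ref{T:Fund-Theorem} so that they collapse to system \eqref{E:Eq7}. The only caveat, which the paper shares, is that Theorem \ref{T:Fund-Theorem} is stated for the class with $\mu_u\,\mu_v\neq 0$, and one needs $\nu\neq 0$ on ${\mathcal D}_0$ for $H$ to be nonzero, so strictly speaking one should pass to a subdomain where these conditions hold.
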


So, by introducing canonical parameters on a surface with  parallel normalized mean curvature vector field we manage to reduce up to three the number of functions and the number of partial differential equations which determine the surface up to a motion.

Our approach to the study of surfaces with parallel normalized mean curvature vector field  can be  applied also to spacelike surfaces in the Minkowski space $\E^4_1$. 

\section{Canonical parameters on spacelike surfaces with parallel normalized mean curvature vector field in $\E^4_1$}

Analogously to the theory of surfaces in the Euclidean space $\E^4$ we  developed an invariant local theory of spacelike surfaces in the Minkowski space $\E^4_1$ \cite{GM3}. 
We  introduced  principal lines and a geometrically determined moving frame field at each point of a spacelike surface. Writing derivative formulas of Frenet-type for this frame field, we obtained eight geometric
functions and  proved a fundamental existence and uniqueness theorem, stating
that any spacelike surface whose mean curvature vector at any
point is a non-zero spacelike vector or a timelike vector is
determined up to a motion in $\E^4_1$ by its eight geometric
functions satisfying some natural conditions \cite{GM3}.

Similarly to the Euclidean case,  we can introduce canonical parameters for the class of spacelike surfaces with parallel normalized mean curvature vector field in $\E^4_1$. The canonical parameters are special isothermal parameters satisfying the conditions 
$$E(u,v) = G(u,v)  = \ds{\frac{1}{|\mu (u,v)|}};\quad F(u,v) = 0.$$
Analogously to the proof of Theorem \ref{T:Canonocal parameters}, we get the following result.

\begin{thm}\label{T:Canonocal parameters-2}
Each  spacelike surface with  parallel normalized mean
curvature vector field in $\E^4_1$ locally admits  canonical parameters. 
\end{thm}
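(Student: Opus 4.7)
The plan is to mirror the proof of Theorem \ref{T:Canonocal parameters} in the spacelike setting of $\E^4_1$, so the first step is to import the Frenet-type apparatus for a spacelike surface developed in \cite{GM3}. That gives an analogous orthonormal (modulo signs) frame field $\{x,y,b,l\}$ with the same eight geometric functions $\gamma_1,\gamma_2,\nu_1,\nu_2,\lambda,\mu,\beta_1,\beta_2$. I would then write down the corresponding system of integrability conditions (the Minkowski analogues of \eqref{E:Eq4}); each equation will look like the Euclidean one up to an $\varepsilon=\pm 1$ coming from $\langle b,b\rangle=\varepsilon$, according to whether $H$ is spacelike or timelike. In particular, the first two integrability equations, which only involve $\mu$, $\gamma_1$, $\gamma_2$, $\lambda$ and the $\beta_i$, will have the identical form
\[
2\mu\gamma_2+\nu_1\beta_2-\lambda\beta_1 = x(\mu),\qquad 2\mu\gamma_1-\lambda\beta_2+\nu_2\beta_1 = y(\mu),
\]
since the pairings used in their derivation match those in the Euclidean case.

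Next I would specialize to surfaces with parallel normalized mean curvature vector field, which by the Weingarten-type formulas means $\beta_1=\beta_2=0$. Plugging this into the first two equations yields
\[
\gamma_1=\tfrac{1}{2}\,y(\ln|\mu|),\qquad \gamma_2=\tfrac{1}{2}\,x(\ln|\mu|),
\]
exactly as in the Euclidean proof. On the other hand, the connection coefficients of the induced Riemannian metric on the spacelike surface yield $\gamma_1=-y(\ln\sqrt{E})$ and $\gamma_2=-x(\ln\sqrt{G})$, because the induced metric is positive definite (this is the key point where spacelikeness is used and why the signs are not disturbed by the ambient indefinite metric). Adding these expressions gives
\[
x(\ln|\mu|E)=0,\qquad y(\ln|\mu|G)=0,
\]
so that $|\mu|E$ depends only on $u$ and $|\mu|G$ depends only on $v$, i.e.\ $|\mu|E=\varphi(u)>0$ and $|\mu|G=\psi(v)>0$.

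Finally, I would perform the same reparametrization as in the Euclidean proof,
\[
\overline{u}=\int_{u_0}^{u}\sqrt{\varphi(u)}\,du+\overline{u}_0,\qquad \overline{v}=\int_{v_0}^{v}\sqrt{\psi(v)}\,dv+\overline{v}_0,
\]
and check directly that the transformed coefficients satisfy $\overline{E}=\overline{G}=1/|\mu|$ and $\overline{F}=0$, which are precisely the canonical conditions in the spacelike Minkowski setting. This reparametrization is legitimate because $\varphi$ and $\psi$ are strictly positive (the surface is spacelike, hence $E,G>0$, and we restrict to surfaces free of minimal points, so $\mu\neq 0$).

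I do not expect any genuine obstacle: the argument is formally identical to the Euclidean one because (i) the induced metric on a spacelike surface is Riemannian, so the $\gamma_i$-identities retain their signs, and (ii) the relevant integrability equations involving $\mu$, $\gamma_1$, $\gamma_2$ are insensitive to whether $b$ is spacelike or timelike. The only point requiring a small amount of care is bookkeeping the $\varepsilon$ factors in the other Gauss--Codazzi equations (they will play a role later for Theorem analogous to \ref{T:Fundamental Theorem}, producing the $\varepsilon$ in \eqref{E:Eq0-2}), but they do not enter the existence of canonical parameters at all.
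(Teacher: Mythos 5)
Your proposal is correct and follows exactly the route the paper intends: the paper gives no separate proof for the Minkowski case, stating only that the result follows ``analogously to the proof of Theorem \ref{T:Canonocal parameters},'' and your argument is precisely that analogue, correctly identifying that the two integrability equations for $\mu$, $\gamma_1$, $\gamma_2$ are unaffected by the sign $\varepsilon=\langle b,b\rangle$ and that spacelikeness keeps the induced metric positive definite so the reparametrization goes through unchanged.
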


The fundamental existence and uniqueness theorem in terms of canonical parameters for the class of
spacelike  surfaces  with  parallel normalized mean curvature vector field states as follows.

\begin{thm}\label{T:Fundamental Theorem-2}
Let $\lambda(u,v)$, $\mu(u,v)$ and $\nu(u,v)$ be smooth functions, defined in a domain
$\mathcal{D}, \,\, \mathcal{D} \subset {\R}^2$, and satisfying the
conditions
\begin{equation}  \label{E:Eq7-1}
\begin{array}{l}
\vspace{2mm}
\mu  \neq 0, \quad \nu \neq const;\\
\vspace{2mm}
\nu_u = \lambda_v - \lambda (\ln|\mu|)_v;\\
\vspace{2mm}
\nu_v = \lambda_u - \lambda (\ln|\mu|)_u;\\
\vspace{2mm}
\varepsilon(\nu^2 - \lambda^2 + \mu^2) = \frac{1}{2}|\mu| \Delta \ln |\mu|, \quad \varepsilon = \pm 1.
\end{array} 
\end{equation}
If $\{x_0, \, y_0, \, b_0,\, l_0\}$ is an
orthonormal frame at a point $p_0 \in \E^4_1$ (with $\langle b_0, b_0 \rangle = \varepsilon$;
$\langle l_0, l_0 \rangle = -\varepsilon$), then there exist a
subdomain ${\mathcal{D}}_0 \subset \mathcal{D}$ and a unique
spacelike surface $M^2: z = z(u,v), \,\, (u,v) \in {\mathcal{D}}_0$ with parallel normalized mean curvature vector field, whose mean curvature vector at any point is spacelike (resp. timelike) in the case $\varepsilon = 1$ (resp. $\varepsilon = - 1$). Moreover, $M^2$ passes through $p_0$, $\{x_0, \, y_0, \, b_0,\, l_0\}$ is the geometric
frame of $M^2$ at the point $p_0$, and the functions   $\lambda(u,v)$, $\mu(u,v)$, $\nu(u,v)$  are the geometric functions
of $M^2$. Furthermore, $(u,v)$ are canonical parameters of $M^2$.

\end{thm}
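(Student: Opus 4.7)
The proof proceeds in direct parallel to the Euclidean case (Theorem \ref{T:Fundamental Theorem}), whose argument is essentially the derivation that was carried out explicitly in the excerpt immediately after Theorem \ref{T:Canonocal parameters}. The plan is to reduce the theorem to the general fundamental existence and uniqueness theorem for spacelike surfaces in $\E^4_1$ established in \cite{GM3}, which determines such a surface up to a motion by its eight geometric functions satisfying the Gauss--Codazzi--Ricci system. I would first write down the Minkowski analogue of the integrability system \eqref{E:Eq4} for spacelike surfaces with respect to the geometric frame $\{x,y,b,l\}$ satisfying $\langle b,b\rangle = \varepsilon$, $\langle l,l\rangle = -\varepsilon$. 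Only the Gauss and Ricci equations pick up $\varepsilon$-factors coming from the inner products of the normal vectors; in particular, the Gauss equation takes the form
\[
\varepsilon\bigl(\nu_1\nu_2 - \lambda^2 + \mu^2\bigr) = x(\gamma_2) + y(\gamma_1) - \bigl(\gamma_1^2 + \gamma_2^2\bigr).
\]

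Next, I would impose the parallelism condition $\beta_1=\beta_2=0$ to reduce the system to the Minkowski analogue of \eqref{E:Eq5}. As in the Euclidean setting, the fifth equation $(\nu_1-\nu_2)\mu = 0$ together with $\mu\neq 0$ forces $\nu_1 = \nu_2 =: \nu$. Passing to canonical parameters $E = G = 1/|\mu|$, $F = 0$, the orthonormal tangent frame becomes $x = \sqrt{|\mu|}\,\partial_u$, $y = \sqrt{|\mu|}\,\partial_v$, and the first two equations of the reduced system determine
\[
\gamma_1 = \bigl(\sqrt{|\mu|}\bigr)_v, \qquad \gamma_2 = \bigl(\sqrt{|\mu|}\bigr)_u.
\]
Substituting into the third and fourth equations produces precisely the first two PDEs of \eqref{E:Eq7-1}, while the Gauss equation combined with the standard identity $K = \tfrac{1}{2}|\mu|\,\Delta\ln|\mu|$ for the isothermal metric $ds^2 = |\mu|^{-1}(du^2 + dv^2)$ yields the third equation, with the sign $\varepsilon$ appearing on the left-hand side.

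For the converse, given $\lambda,\mu,\nu$ satisfying \eqref{E:Eq7-1}, define the eight geometric functions by $\nu_1 = \nu_2 = \nu$, $\beta_1 = \beta_2 = 0$, $\gamma_1 = (\sqrt{|\mu|})_v$, $\gamma_2 = (\sqrt{|\mu|})_u$, together with the given $\lambda$ and $\mu$. A direct verification shows that these satisfy the full Minkowski integrability system of \cite{GM3} together with the positivity conditions $\sqrt{E} = \sqrt{G} = 1/\sqrt{|\mu|} > 0$. Applying the fundamental theorem for spacelike surfaces in $\E^4_1$ with the prescribed orthonormal frame $\{x_0,y_0,b_0,l_0\}$ at $p_0$ yields a unique spacelike surface $M^2$ through $p_0$ whose geometric frame at $p_0$ is the given one; by construction $D b = 0$, the mean curvature vector is a nonzero scalar multiple of $b$ (hence spacelike or timelike according to $\varepsilon$), and $(u,v)$ are canonical parameters of $M^2$.

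The main obstacle I anticipate is a careful bookkeeping of the $\varepsilon$-factors throughout the Gauss--Codazzi--Ricci system. Because the causal character of $b$ and $l$ controls the signs in these equations, one must verify that after setting $\beta_1 = \beta_2 = 0$ only the Gauss equation retains a nontrivial $\varepsilon$-factor, while the Codazzi equations reduce to exactly the same form as in the Euclidean derivation. Once this sign computation is under control, the remaining steps — constructing the eight geometric functions from the given triple, checking the positivity conditions, and invoking the fundamental theorem of \cite{GM3} — are entirely routine adaptations of the Euclidean argument.
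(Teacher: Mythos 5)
Your proposal follows exactly the route the paper intends: the paper gives no explicit proof of this theorem, stating only that it is obtained ``analogously'' to the Euclidean case by reducing the eight geometric functions to three via canonical parameters and invoking the general fundamental theorem for spacelike surfaces from \cite{GM3}, which is precisely what you do (and your $\varepsilon$-bookkeeping in the Gauss equation correctly reproduces the sign pattern $\varepsilon(\nu^2-\lambda^2+\mu^2)$ of the stated system). Your write-up is, if anything, more detailed than the paper's own treatment.
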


\section{Examples} \label{S:Ex}

In \cite{GM2}  we constructed a special class of surfaces which are one-parameter systems of meridians of a rotational hypersurface in $\E^4$ and called them meridian surfaces. Each meridian surface $\mathcal{M}$ is determined by a meridian curve $m$ of a rotational
hypersurface in $\E^4$  and a smooth curve $c$ lying on the unit 2-dimensional sphere $\mathbb{S}^2(1)$ in a 3-dimensional Euclidean subspace $\E^3 \subset \E^4$. All invariants of the meridian surface  $\mathcal{M}$ are expressed by the curvature $\varkappa_m (u) $ of the meridian curve $m$ and the spherical curvature $\varkappa (v)$  of the curve $c$  on $\mathbb{S}^2(1)$. We classified the meridian surfaces with
constant Gauss curvature and the meridian surfaces with constant mean curvature. 
In \cite{GM-BKMS} we gave the complete classification of Chen meridian surfaces and meridian surfaces
with parallel normalized mean curvature vector field.
Meridian surfaces in the Minkowski space $\E^4_1$ are studied in \cite{GM-MC} and \cite{GM-GIQ}. Since in $\E^4_1$ there are three types of rotational hypersurfaces, namely rotational hypersurface with  timelike, spacelike, or lightlike axis, we distinguish three types of meridian surfaces -- elliptic, hyperbolic, and parabolic. We found all meridian surfaces of elliptic, hyperbolic, or parabolic type with parallel normalized mean curvature vector field.  
The geometric functions $\lambda(u,v)$, $\mu(u,v)$, $\nu(u,v)$ of a meridian surface with parallel normalized mean curvature vector field parametrized by canonical parameters $(u,v)$ in $\E^4$ (resp. $\E^4_1$) give a solution to the system of partial differential equations 
\eqref{E:Eq7} (resp. \eqref{E:Eq7-1}).

\subsection{A solution to the system of PDEs describing the  surfaces with parallel normalized mean curvature vector field in $\E^4$}

Let $Oe_1 e_2 e_3 e_4$ be the standard orthonormal frame  in $\E^4$. Let
$$f(u)= \sqrt{u^2 + 2u +5}; \quad g(u) = 2 \ln (u+1 +\sqrt{u^2 + 2u +5})$$
 and  consider the rotational hypersurface $M^3$ 
obtained by the rotation of the meridian curve $m: u
\rightarrow (f(u), g(u))$ about the $Oe_4$-axis, which is parametrized as follows:
$$M^3: Z(u,w^1,w^2) = f(u) \cos w^1 \cos w^2 e_1 +  f(u)\cos w^1 \sin w^2 e_2 + f(u) \sin w^1 e_3 + g(u) e_4.$$
If we denote by $l(w^1,w^2) = \cos w^1 \cos w^2 \,e_1 + \cos w^1 \sin w^2 \,e_2 + \sin w^1 \,e_3$ the unit position vector of the
2-dimensional sphere $\mathbb{S}^2(1)$ lying in $\E^3 = \span \{e_1, e_2, e_3\}$ and centered at the origin $O$, then the parametrization of $M^3$ is written shortly as
 $$M^3: Z(u,w^1,w^2) = f(u)\,l(w^1,w^2) + g(u) \,e_4.$$
If $w^1 = w^1(v)$, $w^2=w^2(v), \,\, v \in J, \, J \subset \R$, then $c: l = l(v) = l(w^1(v),w^2(v)), \, v \in J$ is a
smooth curve  on the sphere $\mathbb{S}^2(1)$. 
We consider the two-dimensional  surface $\mathcal{M}$  defined by:
\begin{equation*} 
\mathcal{M}: z(u,v) = f(u) \, l(v) + g(u)\, e_4, \quad u \in I, \, v \in J.
\end{equation*}
It is a one-parameter system of meridians of the rotational hypersurface 
$M^3$. We call $\mathcal{M}$ a meridian surface on $M^3$.

According to a result in \cite{GM-BKMS}, for an arbitrary spherical curve $c: l = l(v)$  on $\mathbb{S}^2(1)$ with spherical curvature 
$\varkappa (v) \neq 0$, the  corresponding meridian surface $\mathcal{M}$ is a surface with  parallel normalized mean curvature vector field but non-parallel mean curvature vector field. 
The geometric functions  $\lambda$, $\mu$, $\nu$ of the meridian surface $\mathcal{M}$ with respect to the parameters $(u,v)$ are:
\begin{equation}  \label{E:Eq-n}
\begin{array}{l}
\vspace{2mm}
\lambda (u,v) = \ds{\frac{\varkappa (v)}{2\sqrt{u^2 + 2u +5}}}; \\
\vspace{2mm}
\mu (u,v) = \ds{\frac{2}{u^2 + 2u +5}}; \\
\vspace{2mm}
\nu (u,v) = \ds{\frac{\varkappa (v)}{2\sqrt{u^2 + 2u +5}}}. 
\end{array}
\end{equation}
The mean curvature vector field is: 
$$H = \frac{\varkappa (v)}{2\sqrt{u^2 + 2u +5}} \, H_0,$$
where $H_0$ is a unit vector field in the direction of $H$. Since $\langle H, H \rangle \neq const$,  $\mathcal{M}$ is a surface with 
parallel normalized mean curvature vector  but non-parallel $H$.

It is important to note that the parameters $(u,v)$ coming from the parametrization of the meridian curve $m$ are not canonical parameters of the meridian surface $\mathcal{M}$. But if we change the parameters as follows
\begin{equation*} 
\begin{array}{l}
\vspace{2mm}
\bar{u} = \ln (u+1 +\sqrt{u^2 + 2u +5}) +v;\\ 
\vspace{2mm}
\bar{v} = - \ln (u+1 +\sqrt{u^2 + 2u +5}) + v,\\
\end{array}
\end{equation*}
then we get canonical parameters  $(\bar{u},\bar{v})$ of $\mathcal{M}$. Hence, changing  the parameters $(u,v)$ with $(\bar{u},\bar{v})$, we obtain that the functions  $\lambda(u(\bar{u},\bar{v}), v(\bar{u},\bar{v}))$, $\mu(u(\bar{u},\bar{v}), v(\bar{u},\bar{v}))$, and $\nu(u(\bar{u},\bar{v}), v(\bar{u},\bar{v}))$,  given by \eqref{E:Eq-n}
give a solution to the following system of PDEs:
\begin{equation}  \label{E:Eq7-n}
\begin{array}{l}
\vspace{2mm}
\nu_{\bar{u}} = \lambda_{\bar{v}} - \lambda (\ln|\mu|)_{\bar{v}};\\
\vspace{2mm}
\nu_{\bar{v}} = \lambda_{\bar{u}} - \lambda (\ln|\mu|)_{\bar{u}};\\
\vspace{2mm}
\nu^2 - (\lambda^2 + \mu^2) = \frac{1}{2}|\mu| \Delta \ln |\mu|.
\end{array} 
\end{equation}
One can see  also  by a direct computation that the functions given by \eqref{E:Eq-n} satisfy the equalities in system  \eqref{E:Eq7-n}.

\subsection{A solution to the system of PDEs describing the spacelike surfaces with parallel normalized mean curvature vector field in $\E^4_1$}

\vskip 3mm
Solutions to  system \eqref{E:Eq7-1} can be found in the class of the meridian surfaces in the Minkowski 4-space.
In $\E^4_1$ there exist three types of spacelike meridian surfaces and all of them give solutions to the corresponding system of PDEs.
Bellow we present a solution obtained from the class of the meridian surfaces lying on a rotational hypersurface with  lightlike axis, since it is the most interesting rotation in $\E^4_1$.

Let $Oe_1 e_2 e_3 e_4$ be the standard orthonormal frame  in $\E^4_1$, i.e. $\langle e_1,e_1 \rangle
=\langle e_2,e_2 \rangle =\langle e_3,e_3 \rangle = 1; \langle e_4,e_4 \rangle = -1$. 
Let $$f(u)= \sqrt{u+ 1}; \quad g(u) = -\frac{2}{3} (u+1)^{\frac{3}{2}}, \quad u \in (-1; + \infty)$$
 and  consider the rotational hypersurface with  lightlike axis  parametrized as follows:
\begin{equation*}
Z(u,w^1,w^2) =  f(u)\, w^1 \cos w^2 \,e_1 +  f(u)\, w^1 \sin w^2 \,e_2+ \left(f(u) \frac{(w^1)^2}{2} + g(u)\right)\xi_1 + f(u) \,\xi_2,
\end{equation*}
where $\displaystyle{\xi_1= \frac{e_3 + e_4}{\sqrt{2}}},\,\, \displaystyle{\xi_2= \frac{ - e_3 + e_4}{\sqrt{2}}}$.
According to a result in \cite{GM-GIQ}, for an arbitrary curve $c: l = l(v)= l(w^1(v),w^2(v)), \,\, v \in J, \, J \subset \R$ with  curvature $\varkappa (v) \neq 0$ lying  on the paraboloid 
$$\mathcal{P}^2: z(w^1,w^2) =  w^1 \cos w^2 \,e_1 +   w^1 \sin w^2 \,e_2+ \frac{(w^1)^2}{2} \, \xi_1 + \xi_2,$$
the corresponding meridian surface parametrized by
$$\mathcal{M}''': z(u,v) = f(u)\,l(v) + g(u) \xi_1$$ 
 is a surface with  parallel normalized mean curvature vector field (and non-parallel $H$). Indeed, the geometric functions $\lambda(u,v)$, $\mu(u,v)$, $\nu(u,v)$  of the meridian surface $\mathcal{M}'''$  with respect to the parameters $(u,v)$ look like:
\begin{equation} \label{E:Eq-geom-2}
\begin{array}{l}
\vspace{2mm}
\lambda (u,v) = \ds{\frac{\varkappa (v)}{2\sqrt{u+1}}}; \\
\vspace{2mm}
\mu (u,v) = \ds{-\frac{1}{2(u+1)}}; \\
\vspace{2mm}
\nu (u,v) = \ds{\frac{\varkappa (v)}{2\sqrt{u+1}}}.
\end{array}
\end{equation}
The mean curvature vector field is 
$$H = \frac{\varkappa (v)}{2\sqrt{u+1}} \, H_0,$$
where $H_0$ is a unit vector field in the direction of $H$. In this example,  $H_0$ is a spacelike vector at each point, and hence the solution corresponds to the case $\varepsilon = 1$ in system \eqref{E:Eq7-1}. 

Again, the parameters $(u,v)$ coming from the para\-met\-ri\-zation of the meridian curve $m$ are not canonical parameters of the meridian surface 
$\mathcal{M}'''$. But if we  change the parameters as follows:
\begin{equation*} 
\begin{array}{l}
\vspace{2mm}
\bar{u} = \ds{\sqrt{u+1} +\frac{v}{2}};\\ 
\vspace{2mm}
\bar{v} = \ds{- \sqrt{u+1} +\frac{v}{2}},\\
\end{array}
\end{equation*}
then we obtain  canonical parameters  $(\bar{u},\bar{v})$ of the meridian surface. Hence, the functions  $\lambda(u(\bar{u},\bar{v}), v(\bar{u},\bar{v}))$, $\mu(u(\bar{u},\bar{v}), v(\bar{u},\bar{v}))$, and $\nu(u(\bar{u},\bar{v}), v(\bar{u},\bar{v}))$, defined by  \eqref{E:Eq-geom-2}
give a solution to the following system of PDEs:
\begin{equation}  \label{E:Eq7-n-2}
\begin{array}{l}
\vspace{2mm}
\nu_{\bar{u}} = \lambda_{\bar{v}} - \lambda (\ln|\mu|)_{\bar{v}};\\
\vspace{2mm}
\nu_{\bar{v}} = \lambda_{\bar{u}} - \lambda (\ln|\mu|)_{\bar{u}};\\
\vspace{2mm}
\nu^2 - \lambda^2 + \mu^2 = \frac{1}{2}|\mu| \Delta \ln |\mu|.
\end{array} 
\end{equation}

It can be seen  also  by a direct computation that the functions  defined by  \eqref{E:Eq-geom-2}, satisfy the equalities in system  \eqref{E:Eq7-n-2}.

This is the system of PDEs describing the spacelike surfaces with parallel normalized mean curvature vector field in the Minkowski space $\E^4_1$ in the case $\varepsilon = 1$.
Solutions to the same system in the case $\varepsilon = -1$ can be found in the class of the meridian surfaces lying on rotational hypersurfaces with timelike axis.

\vskip 3mm \textbf{Acknowledgements:}
The  authors are partially supported by the National Science Fund,
Ministry of Education and Science of Bulgaria under contract DN 12/2.

\end{document}